\documentclass[12pt]{article}
\setlength{\textwidth}{6.3in}
\setlength{\textheight}{9in}
\setlength{\oddsidemargin}{0in}
\setlength{\topmargin}{-0.3in}
\usepackage{amsthm}
\newtheorem{theorem}{Theorem}
\newtheorem{lemma}{Lemma}
\newcommand\argmin{\mathop{\rm argmin}}
\newcommand\eref[1]{$(\ref{#1})$}
\newcommand\sspan{\mathop{\rm span}}
\renewcommand\b{{\mathbf{b}}}

\newcommand\g{{\mathbf{g}}}
\newcommand\m{{\mathbf{m}}}
\newcommand\n{{\mathbf{n}}}
\newcommand\p{{\mathbf{p}}}
\renewcommand\r{{\mathbf{r}}}
\newcommand\R{{\mathbf{R}}}
\renewcommand\u{{\bf u}}
\renewcommand\v{{\bf v}}
\newcommand\w{{\bf w}}
\newcommand\x{{\bf x}}
\newcommand\y{{\bf y}}

\newcommand\commentout[1]{\relax}
\title{A new secant method for unconstrained optimization (draft)}
\author{Stephen A.~Vavasis\thanks{Department of Combinatorics
and Optimization, University of Waterloo, 200 University Ave.\ W.,
Waterloo, Ontario, Canada, N2L 3G1, {\tt vavasis@math.uwaterloo.ca}.
Supported in part by a grant
from the Natural Sciences and Engineering Research Council (NSERC) of
Canada.}}
\begin{document}
\maketitle
\begin{abstract}
We present a gradient-based algorithm for unconstrained minimization
derived from iterated linear change of basis.  
The new method is equivalent to linear
conjugate gradient in the case of a quadratic objective function.
In the case of exact line search it is a secant method.
In practice, it performs comparably to
BFGS and DFP and is sometimes more robust.
\end{abstract}
\section{Iterated linear change of basis}
We consider the problem of minimizing a differentiable function
$f:\R^n\rightarrow \R$ with no constraints on the variables.
We propose the following algorithm for this problem. 
We assume
a starting point $\w_{{0}}$ is given.  Let $f_{{0}}$ be identified
with $f$.
\begin{tabbing}
++++\=++\=++\=++\=\kill 
\> {\bf Algorithm 1} \\
{[1]}\>for $k=1,2,\ldots$ \\
{[2]}\>\>$\p_{{k}} := -\nabla f_{{k-1}}(\w_{{k-1}})$; \\
{[3]}\>\>$\alpha_{{k}}:=\argmin\{f_{{k-1}}(\w_{{k-1}}+\alpha\p_{{k}}):\alpha\ge 0\}$; \\
{[4]}\>\>$\tilde\w_{{k}}:=\w_{{k-1}}+\alpha_{{k}}\p_{{k}}$; \\
{[5]}\>\>$\g_{{k}}:=-\nabla f_{{k-1}}(\tilde\w_{{k}})$; \\
{[6]}\>\>Define $l_{{k}}:\R^n\rightarrow\R^n$ by
$l_{{k}}(\x)=(I+\p_{{k}}\g_{{k}}^T/\Vert\p_{{k}}\Vert^2)(\x)$; \\
{[7]}\>\>$f_{{k}} := f_{{k-1}}\circ l_{{k}};$ \\
{[8]}\>\>$\w_{{k}}:=l_{{k}}^{-1}(\tilde\w_{{k}})$; \\
{[9]}\> end
\end{tabbing}
Lines [1]--[4] of this algorithm are the standard steepest descent
computation.
In the third line, an inexact line search may be used in place
of exact minimization over $\alpha$.  In the sixth line, $I$ is
the $n\times n$ identity matrix.
The seventh line indicates
functional composition: a new objective function is formed
as the composition of the old objective function and a linear
change of variables.  

The eighth line applies the inverse transformation
to $\tilde\w_{{k}}$ so as to enforce the relationship
$f_{{k-1}}(\tilde\w_{{k}})=f_{{k}}(\w_{{k}})$. 
The inverse transform is efficiently computed and applied using the
Sherman-Morrison formula.
Although the function value is invariant, the
gradient value is not, so the algorithm is not equivalent to a
sequence of 
steepest descent steps in the original coordinates.
This algorithm is equivalent to the linear conjugate gradient 
algorithm in the case that $f$ is a convex
quadratic function and the line search
is exact, as we shall see in Section~\ref{sec:linearcg}.

When the algorithm terminates, say at
iteration $N$, the vector $\w^{(N)}$ is a minimizer or
approximate minimizer of $f^{(N)}$.  Therefore, the linear transformations
must be saved and applied to $\w^{(N)}$ in order to recover a solution
to the original problem.

In certain special classes of problems, it may be feasible to implement
the algorithm exactly as stated because the objective function may
be accessible for updating.  More commonly, however, the objective function
is available only as a subroutine, in which case the algorithm
must be restated in a way so that it keeps track of the linear updates
itself.  In particular, it must save the two vectors defining the
linear transformation from all previous iterations.  
Then the chain rule is applied, which states that if $g(\x)=f(l(\x))$,
where $l$ is a linear function, then $\nabla g(\x)=l^T(\nabla f(l(\x)))$,
where $l^T$ denotes the transposed linear function.
This version of
the algorithm is as follows.  There is no longer a subscript on
$f$ since $f$ is not explicitly updated in this version.
The current iterate in this algorithm is denoted $\x_{{k}}$ and
must be initialized as $\x_{{0}}$, which is equal to $\w_{{0}}$ in
Algorithm 1.
\begin{tabbing}
++++\=++\=++\=++\=\kill 
\> {\bf Algorithm 2} \\
{[1]}\>for $k=1,2,\ldots$ \\
{[2]}\>\>$\p_{{k}} := -l_{{k-1}}^T\circ\cdots\circ l_{{1}}^T(\nabla f(\x_{{k-1}}))$; \\
{[3]}\>\>$\m_{{k}} := l_{{1}}\circ\cdots\circ l_{{k-1}}(\p_{{k}})$; \\
{[4]}\>\>$\alpha_{{k}}:=\argmin\{f(\x_{{k-1}}+\alpha\m_{{k}}):\alpha\ge 0\}$; \\
{[5]}\>\>$\x_{{k}}:=\x_{{k-1}}+\alpha_{{k}}\m_{{k}}$; \\
{[6]}\>\>$\g_{{k}}:= -l_{{k-1}}^T\circ\cdots\circ l_{{1}}^T(\nabla f(\x_{{k}}))$; \\
{[7]}\>\>Define $l_{{k}}:\R^n\rightarrow\R^n$ by
$l_{{k}}(\x)=(I+\p_{{k}}\g_{{k}}^T/\Vert\p_{{k}}\Vert^2)\x$; \\
{[8]}\> end
\end{tabbing}
The fact that Algorithms 1 and 2 are equivalent is an easy
induction.  The variables $\p_{{k}}$, $\alpha_{{k}}$ and $\g_{{k}}$
are identical between the two algorithms, as are the sequences of
linear transformations $l_{{k}}$.  The remaining variables have the
following relationships: $\x_{{k}}=l_{{1}}\circ\cdots\circ l_{{k}}(\w_{{k}})$
and $\x_{{k}}=l_{{1}}\circ\cdots\circ l_{{k-1}}(\tilde\w_{{k}})$.
Note that some redundant computation in step [2] can be saved
by observing that 
\begin{equation}
\p_{{k}}=l_{{k-1}}^T(\g_{{k-1}}),
\label{eq:pkgk}
\end{equation}
where $\g_{{k-1}}$ was
computed in step [6] of the previous iteration.

We conclude this section with a result concerning the invertibility
of the linear transformations.

\begin{lemma}
Assume $f$ is $C^1$ and none of the iterates in Algorithm $1$ is 
a stationary point.
Suppose an exact line search is used in Algorithm $1$.  Then
$l_k$ is invertible on every step.
\end{lemma}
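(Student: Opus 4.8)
The plan is to reduce invertibility of $l_k$ to a determinant computation. Since $l_k = I + \p_k\g_k^T/\Vert\p_k\Vert^2$ is a rank-one perturbation of the identity, the identity $\det(I+\u\v^T)=1+\v^T\u$ (applied with $\u=\p_k/\Vert\p_k\Vert^2$ and $\v=\g_k$) gives
\begin{equation}
\det(l_k)=1+\frac{\g_k^T\p_k}{\Vert\p_k\Vert^2}.
\label{eq:detlk}
\end{equation}
So it suffices to check that the denominator is nonzero and that the numerator does not equal $-\Vert\p_k\Vert^2$; in fact I will show $\g_k^T\p_k=0$, so that $\det(l_k)=1$ and \eref{eq:detlk} is never zero.

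For the denominator, note $\p_k=-\nabla f_{k-1}(\w_{k-1})$, and since $\w_{k-1}$ is assumed not to be a stationary point of $f_{k-1}$, we have $\nabla f_{k-1}(\w_{k-1})\neq\mathbf 0$, hence $\Vert\p_k\Vert\neq 0$. (This also makes $l_k$ well defined in the first place.) For the numerator, I use the exact line search. Set $\phi(\alpha)=f_{k-1}(\w_{k-1}+\alpha\p_k)$. Because $\p_k=-\nabla f_{k-1}(\w_{k-1})$, we get $\phi'(0)=\nabla f_{k-1}(\w_{k-1})^T\p_k=-\Vert\p_k\Vert^2<0$, so $\p_k$ is a strict descent direction; therefore the minimizer $\alpha_k$ of $\phi$ over $\alpha\ge 0$ is not $\alpha=0$, i.e. it is an interior minimizer. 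Since $f$, and hence $\phi$, is $C^1$, the first-order optimality condition at an interior minimizer yields $\phi'(\alpha_k)=0$, that is, $\nabla f_{k-1}(\tilde\w_k)^T\p_k=0$. As $\g_k=-\nabla f_{k-1}(\tilde\w_k)$, this is precisely $\g_k^T\p_k=0$.

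Substituting into \eref{eq:detlk} gives $\det(l_k)=1\neq 0$, so $l_k$ is invertible, and this holds on every step $k$. Note that no induction on $k$ is required: the argument for a fixed $k$ uses only the non-stationarity of $\w_{k-1}$ and the exactness of the $k$-th line search. The one point that needs a little care is the line-search step itself — one must know that the exact minimizer $\alpha_k$ is actually attained at a finite positive value so that the stationarity condition $\phi'(\alpha_k)=0$ is available; this is exactly what the strict-descent property $\phi'(0)<0$ provides, together with the standing assumption that the algorithm executes (so that the $\argmin$ in line [3] is attained). This is the only place where exactness of the line search is essential, and it is the crux of the lemma.
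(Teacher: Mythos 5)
Your proof is correct and follows essentially the same route as the paper: both arguments reduce to showing $\g_k^T\p_k=0$ via the first-order optimality condition of the exact line search, and then invoke the standard fact that $I+\u\v^T$ is singular only when $\v^T\u=-1$ (you state it via the determinant identity, the paper via Sherman--Morrison, but these are the same criterion). Your additional care about $\p_k\neq\mathbf{0}$ and about why the minimizer is attained at $\alpha_k>0$ so that $\phi'(\alpha_k)=0$ holds is a welcome tightening of the paper's terser argument, not a different approach.
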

\begin{proof}
It follows from standard theory of steepest descent that if an
exact line search is used, then 
\begin{equation}
\p_k^T\g_k=0.
\label{eq:pgorthog}
\end{equation}
(This is the first-order
condition for the optimality of $\alpha$ for the differentiable
function $f(\w_{k-1}+\alpha\p_k)$).  In this case,
$I+\p_k\g_k^T/(\p_k^T\p_k)$ is invertible since it follows from the
Sherman-Morrison formula that $I+\u\v^T$ is invertible unless $\u^T\v=-1$.
\end{proof}

We remark that many kinds of inexact line searches will also 
yield the same result.  The requirement for invertibility of $l_k$ is
that $\p_k^T\p_k\ne -\g_k^T\p_k$.  Written in terms of the line search
function $\phi(\alpha)=f_k(\w_k+\alpha\p_k)$, this is the same as
saying that $\phi'(0)\ne-\phi'(\alpha_k)$.  A line search will often
enforce the condition $|\phi'(\alpha)|<|\phi'(0)|$.

Steps [2]--[3] of Algorithm 2 may be written as
$\m_k=-H_k\nabla f(\x_{k-1})$, where $H_k=l_1\cdots l_{k-1}l_{k-1}^T\cdots l_1^T$.
Obviously, $H_k$ is positive semidefinite, and assuming the condition
in the previous paragraph holds, it is positive definite.  This means
that Algorithm 2 always produces descent directions except in the 
unexpected case that $\p_k^T\p_k= -\g_k^T\p_k$.

\section{Specialization to quadratic functions}
In this section we present some results on the specialization of 
Algorithm 1
to convex quadratic functions with exact line search.  In particular,
we prove finite termination of the algorithm.  Finite termination
is also a consequence of the equivalence to linear conjugate gradient
(discussed in the next section), but the proof presented here
is a short self-contained proof that follows different lines
from customary proofs
of finite termination.
The difference
arises from the fact that Algorithm 1 is a one-step method (i.e.,
it does not involve recurrences), and therefore its analysis does not 
require
an induction hypothesis that spans the iterations as in the customary
analysis.

Suppose that $f(\w)=f_0(\w)=\w^TA_0\w/2 - \b_0^T\w$, where 
$A_0\in\R^{n\times n}$ is symmetric and positive definite.
Then it follows from step [7] of Algorithm 1 that
$f_k(\w)=\w^TA_k\w/2-\b_k^T\w$, where $A_k=l_k^T\cdots l_1^TA_0l_1\cdots l_k$
and $\b_k=l_k^T\cdots l_1^T\b_0$.

In the case of quadratic functions,
the optimal choice of $\alpha_k$  in step [3] of Algorithm 1
is well known to be (see \cite{GVL})
\begin{equation}
\alpha_k=\frac{\p_k^T\p_k}{\p_k^TA_{k-1}\p_k}.
\label{eq:alpha}
\end{equation}
We can develop the following further relationships.  
Combining \eref{eq:pkgk} and \eref{eq:pgorthog} yields
\begin{eqnarray*}
\p_{k+1} &=& l_k^T(\g_k) \\
& = & \left(I+\frac{\g_k\p_k^T}{\p_k^T\p_k}\right)\g_k \\
&=&\g_k.
\end{eqnarray*}
Also,
\begin{eqnarray*}
\g_k=\p_{k+1} &= & -\nabla f_{k-1}(\tilde\w_k) \\
&=& -A_{k-1}\tilde\w_k+\b_{k-1} \\
&=& -A_{k-1}\w_k+\b_{k-1}-\alpha_kA_{k-1}\p_k \\
& = & \p_k-\alpha_kA_{k-1}\p_k.
\end{eqnarray*}

With these relationships in hand, we can now propose the main
result that implies to finite termination.

Let us introduce the following notation:
$$K(A_{k-1},\p_{k})=\sspan(\p_k,A_{k-1}\p_k,A_{k-1}^2\p_k,\ldots),$$
i.e., the minimal invariant subspace of $A_{k-1}$ that contains $\p_k$.

\begin{theorem}
The invariant subspace $K(A_{k},\p_{k+1})$ is a proper subspace
of $K(A_{k-1},\p_k)$.
\end{theorem}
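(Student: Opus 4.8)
The plan is to isolate a single fact --- that $\p_k$ is an eigenvector of $A_k$ --- from which both halves of the statement follow almost immediately. Write $l_k = I + \p_k\g_k^T/\Vert\p_k\Vert^2$, so that $l_k^T = I + \g_k\p_k^T/\Vert\p_k\Vert^2$ and $A_k = l_k^T A_{k-1} l_k$. Since the line search is exact, $\p_k^T\g_k = 0$ by \eref{eq:pgorthog}, hence $l_k\p_k = \p_k$. Using $\alpha_k = \Vert\p_k\Vert^2/(\p_k^T A_{k-1}\p_k)$ from \eref{eq:alpha} together with the identity $\g_k = \p_k - \alpha_k A_{k-1}\p_k$ derived above, the key computation is
\[
A_k\p_k = l_k^T A_{k-1}\p_k = A_{k-1}\p_k + \frac{\g_k}{\alpha_k} = \frac{\p_k - \g_k}{\alpha_k} + \frac{\g_k}{\alpha_k} = \frac{1}{\alpha_k}\,\p_k ,
\]
where $\alpha_k$ is finite and positive because $A_{k-1}$ is positive definite and $\p_k\ne 0$.

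Next I would prove the inclusion $K(A_k,\p_{k+1})\subseteq K(A_{k-1},\p_k)$ by showing the right-hand subspace, call it $V$, is invariant under $A_k$ and contains $\p_{k+1}$. It is $A_{k-1}$-invariant by definition; it is $l_k$-invariant because $l_k\x-\x$ is always a scalar multiple of $\p_k\in V$; and it is $l_k^T$-invariant because $l_k^T\x-\x$ is always a scalar multiple of $\g_k = \p_k - \alpha_k A_{k-1}\p_k\in V$. Hence $V$ is invariant under $A_k = l_k^T A_{k-1} l_k$, and since $\p_{k+1}=\g_k\in V$, minimality of $K(A_k,\p_{k+1})$ among $A_k$-invariant subspaces containing $\p_{k+1}$ gives the inclusion.

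To see the inclusion is strict, I would exhibit $\p_k$ as an element of $V$ that is not in $K(A_k,\p_{k+1})$. Membership in $V$ is trivial. For the rest, symmetry of $A_k$ and the eigenvector relation give, for every $j\ge 0$,
\[
\p_k^T A_k^j\p_{k+1} = (A_k^j\p_k)^T\p_{k+1} = \alpha_k^{-j}\,\p_k^T\p_{k+1} = \alpha_k^{-j}\,\p_k^T\g_k = 0 ,
\]
so $\p_k$ is orthogonal to $\sspan(\p_{k+1},A_k\p_{k+1},A_k^2\p_{k+1},\ldots) = K(A_k,\p_{k+1})$; since $\p_k\ne 0$, it cannot lie in that subspace, and the containment is proper.

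The one genuinely nonroutine step is spotting the eigenvector identity $A_k\p_k = \alpha_k^{-1}\p_k$; once it is in hand, the inclusion is pure invariance bookkeeping and strictness uses only the exact-line-search orthogonality $\p_k^T\g_k = 0$. The point that needs a little care is verifying that $\alpha_k$ is finite and positive, which reduces to positive definiteness of $A_{k-1}$; this holds because $A_{k-1} = l_{k-1}^T\cdots l_1^T A_0 l_1\cdots l_{k-1}$ with $A_0$ positive definite and each $l_i$ invertible by Lemma~1.
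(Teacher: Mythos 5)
Your proof is correct and follows essentially the same route as the paper's: the same invariance argument (closure of $K(A_{k-1},\p_k)$ under $l_k$, $l_k^T$ and $A_{k-1}$) for the inclusion, and the same key computation $l_k^TA_{k-1}\p_k=\p_k/\alpha_k$ for strictness via orthogonality to $\p_k$. Your only real departure is cosmetic but slightly cleaner: where the paper factors $(l_k^TA_{k-1}l_k)^i$ into operators admitting $\p_k^T$ as a left eigenvector, you package the identical algebra as the single eigenvector relation $A_k\p_k=\alpha_k^{-1}\p_k$ and exploit the symmetry of $A_k$ to get $\p_k^TA_k^j\p_{k+1}=\alpha_k^{-j}\p_k^T\g_k=0$.
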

\begin{proof}
First, observe that $\p_{k+1}(=\g_k)\in K(A_{k-1},\p_{k})$, which follows
from equality demonstrated above that $\p_{k+1}=\p_k-A_{k-1}\p_k$.
Next, we claim more generally that
$K(A_k,\p_{k+1})\subset K(A_{k-1},\p_{k})$.
This follows because $A_k=l_k^TA_{k-1}l_k$.  The three operators
$l_k$, $l_k^T$ and $A_{k-1}$ all map $K(A_{k-1},\p_k)$ into itself
since $\g_k$ and $\p_k$ are both already proven to lie in this
space.  Thus, $A_k$ maps $K(A_{k-1},\p_k)$ into itself.

Thus, we have shown $K(A_k,\p_{k+1})\subset K(A_{k-1},\p_{k})$.
To conclude the proof, we must show that it is a proper subspace.
We claim that $K(A_k,\p_{k+1})\subset \p_k^\perp$.  Observe first
that $\p_{k+1}\in\p_k^{\perp}$; this follows immediately from
\eref{eq:pgorthog}.  Next, it is obvious from the
definition of $l_k$ that $\p_k$ is a right eigenvector of $l_k$.  Furthermore,
$\p_k$ is a right eigenvector of $l_k^TA_{k-1}$, as we see from the following
algebra:
\begin{eqnarray*}
l_k^TA_{k-1}\p_k & = & \left(I+\frac{\g_k\p_k^T}{\p_k^T\p_k}\right)A_{k-1}\p_k \\
& = & A_{k-1}\p_k +\g_k\frac{\p_k^TA_{k-1}\p_k}{\p_k^T\p_k} \\
& = & A_{k-1}\p_k + \g_k/\alpha_k \\
& = & A_{k-1}\p_k + (\p_k-\alpha_kA_{k-1}\p_k)/\alpha_k \\
& = & \p_k/\alpha_k.
\end{eqnarray*}
The statement under consideration
$K(A_k,\p_{k+1})\subset \p_k^\perp$ can be rewritten as
the equation $\p_k^TA_k^i\p_{k+1}=0$ for all $i$, i.e.,
$\p_k^T(l_k^TA_{k-1}l_k)^i\g_k=0$.  But $(l_k^TA_{k-1}l_k)^i$ can be 
factored as products of $l_k^T$ and $A_{k-1}l_k$, and we have just
proved that $\p_k^T$ is a left eigenvector of both of these operators.
Therefore, $\p_k^T(l_k^TA_{k-1}l_k)^i\g_k=\mbox{scalar}\cdot\p_k^T\g_k=0$.

Therefore, we have proved that 
$$K(A_k,\p_{k+1})\subset K(A_{k-1},\p_{k})\cap \p_k^{\perp}.$$
Thus, to show that $K(A_k,\p_{k+1})$ is a proper subset of 
$K(A_{k-1},\p_{k})$, it suffices to show that $K(A_{k-1},\p_k)$ is
not a subspace of $\p_k^{\perp}$.  But this is obvious, since the
former contains $\p_k$ while the latter does not.
\end{proof}

This theorem proves finite termination of Algorithm 1: 
the dimension of the invariant subspace at iteration 0 is at
most $n$, and the dimension shrinks by at least 1 each iteration,
so therefore the algorithm terminates in at most $n$ iterations.

More strongly, if the coefficient matrix $A_0$ has at most $s$ distinct
eigenvalues, then Algorithm 1 terminates in at most $s$ iterations,
since any vector lies in an invariant subspace of dimension at most $s$
for such a matrix.

Finally, the above theorem suggests that Algorithm 1 
converges superlinearly.  We recall the following two
facts (see \cite{GVL}): 
the steepest descent algorithm applied to a convex quadratic
function converges at a rate proportional to the condition number
of the matrix.  Furthermore, the condition number of a matrix
acting on a subspace can never exceed (and is usually less than) the
condition number of the matrix acting on the whole space, a consequence
of the Courant-Fisher minimax theorem.  Thus,
we see that Algorithm 1 consists of steepest descent in ever smaller
invariant subspaces, so the effective condition number of the matrix
decreases (or at least, does not increase) each iteration and hence
the convergence rate is expected to be superlinear.

\section{Equivalence to linear conjugate gradient}
\label{sec:linearcg}
Again, we assume for this section that
$f(\x)=\x^TA\x/2-\b^T\x$,
where $A\in\R^{n\times n}$ is symmetric
and positive definite.  We assume again that the line search
is exact.  We prove that Algorithm 2 is equivalent to linear
conjugate gradient.   For the sake of completeness, let
us write linear conjugate gradient in its usual form as follows.  Let $\x_0$
be given.
\begin{tabbing}
++++\=++\=++\=\kill
\> {\bf Algorithm Linear-CG} \\
{[1]} \> $\r_0:=\b-A\x_0$; \\
{[2]} \> for $k=1,2,\ldots$ \\
{[3]} \>\>if $k=1$ \\
{[4]} \>\>\> $\n_1=\r_0;$ \\
{[5]} \>\>else \\
{[6]}\>\>\> $\beta_k = \r_{k-1}^T\r_{k-1}/(\r_{k-2}^T\r_{k-2});$ \\
{[7]}\>\>\> $\n_k=\beta_k\n_{k-1}+\r_{k-1};$ \\
{[8]} \>\> end \\
{[9]} \>\> $\alpha_k=\r_{k-1}^T\r_{k-1}/(\n_k^TA\n_k);$ \\
{[10]} \>\> $\x_k=\x_{k-1}+\alpha_k\n_k$; \\
{[11]} \>\> $\r_k=\r_{k-1}-\alpha_kA\n_k$; \\
{[12]}\> end
\end{tabbing}
Well known properties of Linear-CG are that $\r_k=\b-A\x_k=-\nabla f(\x_k)$ 
and that
the $\r_k$'s are mutually orthogonal (see \cite{GVL}).
We claim that
Algorithm 2 and Algorithm Linear-CG are equivalent with the
following relationships among the variables:
$\p_k=\r_{k-1}$; $\m_k=\n_k$; $\g_k=\r_k$, and $\alpha_k$ is the
same between the algorithms. This equivalence is proved by
induction.  For the $k=1$ case, it is clear that 
$\p_1=\m_1=\n_1=\r_0$ and $\g_1=\r_1$.  For $k>1$, we see that
\begin{eqnarray*}
\p_k&=&-l_{k-1}^T\cdots l_1^T\nabla f(\x_{k-1}) \\
&=&\left(I+\frac{\g_{k-1}\p_{k-1}^T}{\p_{k-1}^T\p_{k-1}}\right)
\cdots\left(I+\frac{\g_1\p_1^T}{\p_1^T\p_1}\right)\r_{k-1} 
\\
& = &
\left(I+\frac{\r_{k-1}\r_{k-2}^T}{\r_{k-2}^T\r_{k-2}}\right)
\cdots\left(I+\frac{\r_1\r_0^T}{\r_0^T\r_0}\right)\r_{k-1} \\
&=&\r_{k-1}.
\end{eqnarray*}
The second and third line both involved application of the
induction hyptohesis, and the last line follows because all
terms drop out from the product with $\r_{k-1}$
except the identity because the $\r_i$'s are
mutually orthogonal.

Next, we show by induction that $\n_k=\m_k$.
Observe from step [7] of Linear-CG that 
$\n_k-\beta_k\n_{k-1}=\r_{k-1}$ while
\begin{eqnarray*}
\m_k-\beta_k\m_{k-1} & = &
\m_k-\left(\frac{\r_{k-1}^T\r_{k-1}}{\r_{k-2}^T\r_{k-2}}\right)\m_{k-1} \\
&=&
l_1\cdots l_{k-1}\p_k-\left(\frac{\r_{k-1}^T\r_{k-1}}{\r_{k-2}^T\r_{k-2}}\right)
l_1\cdots l_{k-2}\p_{k-1} \\
&=&l_1\cdots l_{k-2}\left(l_{k-1}\r_{k-1}-\left(\frac{\r_{k-1}^T\r_{k-1}}{\r_{k-2}^T\r_{k-2}}\right)
\r_{k-2}\right) \\
&=&
l_1\cdots l_{k-2}\left(\r_{k-1}+\frac{\r_{k-2}\r_{k-1}^T\r_{k-1}}{\r_{k-2}^T\r_{k-2}}
-\left(\frac{\r_{k-1}^T\r_{k-1}}{\r_{k-2}^T\r_{k-2}}\right)\r_{k-2}\right)
 \\
&=&l_1\cdots l_{k-2}(\r_{k-1}) \\
&=&\r_{k-1}.
\end{eqnarray*}
In the above derivation, we applied the induction hypothesis, the
definition of $l_{k-1}$, and, for the last line, again the fact that 
the $\r_i$'s are mutually orthogonal.
This equation proves that $\n_k-\beta_k\n_{k-1}=\m_k-\beta_k\m_{k-1}$,
hence the sequence of $\m_k$'s and $\n_k$'s are equal. Finally,
we must claim that $\g_k=\r_k$.  Again, this follows from step [6] of
Algorithm 2 and the
orthogonality of the $\r_k$'s.

\section{The secant condition}
In this section we drop the assumption that $f$ is quadratic
but continue to assume that it is $C^1$.  We prove that 
if the line search is exact, then
Algorithm 2 satisfies the secant condition, which states
$$H_{k+1}\y_{k}=\m_{k}$$
where $H_{k+1}=l_1\circ\cdots \circ l_{k}\circ l_{k}^T\circ\cdots
\circ l_1^T$, that is, the operator that carries 
$-\nabla f(\x_k)$ to $\m_{k+1}$, and
$\y_{k+1}=\nabla f(\x_{k+1})-\nabla f(\x_k)$.
The secant condition is usually stated as the
requirement that $H_{k+1}\y_k=\alpha_k\m_{k}$ \cite{NocedalWright}.  
The scaling
factor, however, is inconsequential because the algorithm can
be equivalently presented 
with a different scaling of $H_k$; that scaling would be canceled
in the line search, which would carry out the reciprocal scaling.

It should be noted that the best known secant algorithms
including DFP and BFGS satisfy the secant condition regardless
of whether the line search
is exact, so Algorithm 2 differs from these algorithms
in this respect.

Checking the secant condition is fairly straightforward
algebra as follows. 
It follows from steps [2] and [6] that
$l_{k-1}^T\circ\cdots\circ l_1^T(\nabla f(\x_{k+1}))=-\g_k$
and $l_{k-1}^T\circ\cdots\circ l_1^T(\nabla f(\x_{k}))=-\p_k$,
hence
$$l_{k-1}^T\circ\cdots\circ l_1^T(\nabla f(\x_{k+1})-\nabla f(\x_{k}))
=\p_k-\g_k.$$
Next, applying $l_k^T$ yields:
\begin{eqnarray*}
l_k^Tl_{k-1}^T\circ\cdots\circ l_1^T(\nabla f(\x_{k+1})-\nabla f(\x_{k}))
&=&
l_k^T(\p_k-\g_k) \\
&=&\left(I+\frac{\g_k\p_k^T}{\p_k^T\p_k}\right)(\p_k-\g_k) \\
&=&\p_k-\g_k+\frac{\g_k\p_k^T\p_k}{\p_k^T\p_k}-\frac{\g_k\p_k^T\g_k}
{\p_k^T\p_k} \\
&=&\p_k,
\end{eqnarray*}
where, to obtain the last line, we invoked \eref{eq:pgorthog} since
the line search is exact.  Next, since $\p_k$ is an eigenvector
of $l_k$ with eigenvalue 1 (again using the fact that the
line search is exact so $\g_k^T\p_k=0$),
$$l_kl_k^Tl_{k-1}^T\circ\cdots\circ l_1^T(\nabla f(\x_{k+1})-\nabla f(\x_{k}))
=\p_k.$$
Finally, applying $l_1\cdots l_{k-1}$ to both  sides and applying
statement [3] yields the desired result.

\section{Computational results (preliminary)}

In this section we compare Algorithm 2 to BFGS, DFP, Polak-Ribi\`ere
conjugate gradient (CG-PR+), and Fletcher-Reeves conjugate 
gradient (CG-FR).  
Refer to \cite{NocedalWright} for information about all of these
algorithms.  
 In this section we denote Algorithm 2 as
SDICOV for ``steepest descent with iterated change of variables.''
We report only the number of iterations.  The BFGS and
DFP algorithms are implemented using product form rather than explicit
formation of $H_k$.  This means that, like SDICOV, the number of
operations
and storage requirement for the $k$th iteration is $O(kn)$ plus a function
and gradient evaluation (plus additional function and gradient evaluations
in the line search).  In contrast, CG-PR+ and CG-FR require only $O(n)$
storage and $O(n)$ operations per iteration.  Therefore, the iteration
counts reported here partially hide the greater efficiency of CG-PR+ 
and CG-FR. 

The first test is the nonconvex {\em distance geometry} problem
\cite{Hendrickson},
a nonlinear least squares problem.  There are $n$ particles
in $\R^2$ whose positions are unknown.  One is given the interparticle
distances for some subset of  possible pairs of particles.  The
problem is to recover the coordinates from these distances.  Thus,
the unknowns are $\x_3,\ldots,\x_n$, positions of particles 3 to $n$,
each a vector in $\R^2$.
To remove
degenerate degrees of freedom,
we assume the positions of particles 1 and 2 are fixed.
The objective function is
$$f(\x_3,\ldots,\x_n)=\sum_{(i,j)\in E} (||\x_i-\x_j||^2 - d_{ij}^2)^2$$
where $E$ denotes the subset of $\{1,\ldots,n\}^2$ of pairs whose
distance is given and $d_{ij}$ denotes the given distance.
This problem has multiple local minima
(indeed, global minimization of this function
is known to be NP-hard), so the testing procedure must account for
the possibility that
that different algorithms could converge to different
minimizers, which could skew iteration counts.
To avoid this possibility, we constructed instances with a known
global minimizer (by first selecting the positions randomly, and then computing
the interpair
distances from those positions).  Then we initialized the algorithm
fairly close to the global minimizer so that all algorithms would
fall into the same basin.  The line search procedure is inexact:
it uses bisection
with a termination criterion that $|\phi'(\alpha)|\le 0.2|\phi'(0)|$.
The convergence tolerance is a relative reduction in the norm of the
gradient of $10^{-5}$.  Two sizes were tried, namely 10
particles ($n=16$) and 100 particles ($n=196$).  For each
problem size, four trials were run, and the number of iterations
over the trials was averaged.  The results are summarized
in Table~\ref{table:distg}.  For the smaller problem SDICOV was worse
than BFGS or DFP, but for the larger problem, the three algorithms
have similar performance.  The two versions of conjugate gradient
are slower.

\begin{table}
\begin{center}
\caption{Results of distance geometry trials}
\label{table:distg}
\begin{tabular}{lrr}
\hline
\multicolumn{1}{l}{Algorithm} & \multicolumn{2}{c}{Ave.~no.~iterations} \\
& $n_{\rm particle}=10$ & $n_{\rm particle}=100$ \\
\hline
SDICOV & 34 & 76 \\
BFGS & 20 & 75 \\
DFP & 24 & 80 \\
CG-PR+ & 93 & 107 \\
CG-FR & 146 & 161 \\
\hline
\end{tabular}
\end{center}
\end{table}

\commentout{
numatom == 100

>> distgtest
------------------------------------------------
distgtest icov  finalval = 3.293917e-008 itcount = 74
distgtest dfpp  finalval = 2.725689e-008  itcount = 79
distgtest cg-pr+  finalval = 8.602798e-008 itcount = 124
distgtest cg-fr  finalval = 2.890075e-008  itcount = 150
distgtest bfgsp  finalval = 2.915293e-008 itcount = 74
diff btw 1 and 2 is 2.057992e-004
diff btw 1 and 3 is 2.237303e-003
diff btw 1 and 4 is 3.220835e-004
diff btw 1 and 5 is 1.535417e-004
diff btw 2 and 3 is 2.281145e-003
diff btw 2 and 4 is 2.212700e-004
diff btw 2 and 5 is 6.908578e-005
diff btw 3 and 4 is 2.259572e-003
diff btw 3 and 5 is 2.278064e-003
diff btw 4 and 5 is 2.409523e-004
>> distgtest
------------------------------------------------
distgtest icov  finalval = 7.666506e-009 itcount = 83
distgtest dfpp  finalval = 1.012170e-008  itcount = 88
distgtest cg-pr+  finalval = 1.231447e-007 itcount = 128
distgtest cg-fr  finalval = 5.753406e-009  itcount = 202
distgtest bfgsp  finalval = 8.466405e-009 itcount = 81
diff btw 1 and 2 is 3.237997e-004
diff btw 1 and 3 is 4.203237e-003
diff btw 1 and 4 is 2.885356e-004
diff btw 1 and 5 is 8.032296e-005
diff btw 2 and 3 is 3.937164e-003
diff btw 2 and 4 is 2.842586e-004
diff btw 2 and 5 is 3.721892e-004
diff btw 3 and 4 is 3.974206e-003
diff btw 3 and 5 is 4.252950e-003
diff btw 4 and 5 is 3.289217e-004
>> distgtest
------------------------------------------------
distgtest icov  finalval = 8.553891e-009 itcount = 78
distgtest dfpp  finalval = 7.156225e-009  itcount = 84
distgtest cg-pr+  finalval = 1.103633e-007 itcount = 73
distgtest cg-fr  finalval = 4.307051e-009  itcount = 159
distgtest bfgsp  finalval = 1.038866e-008 itcount = 76
diff btw 1 and 2 is 2.267768e-004
diff btw 1 and 3 is 3.689288e-003
diff btw 1 and 4 is 3.257460e-004
diff btw 1 and 5 is 2.170784e-004
diff btw 2 and 3 is 3.484331e-003
diff btw 2 and 4 is 4.982249e-004
diff btw 2 and 5 is 1.017868e-004
diff btw 3 and 4 is 3.930130e-003
diff btw 3 and 5 is 3.485601e-003
diff btw 4 and 5 is 5.145860e-004
>> distgtest
------------------------------------------------
distgtest icov  finalval = 6.210763e-008 itcount = 67
distgtest dfpp  finalval = 6.076947e-008  itcount = 69
distgtest cg-pr+  finalval = 6.729692e-008 itcount = 103
distgtest cg-fr  finalval = 5.770561e-008  itcount = 133
distgtest bfgsp  finalval = 6.074425e-008 itcount = 68
diff btw 1 and 2 is 9.585632e-005
diff btw 1 and 3 is 6.072454e-004
diff btw 1 and 4 is 3.435225e-004
diff btw 1 and 5 is 8.624309e-005
diff btw 2 and 3 is 6.583499e-004
diff btw 2 and 4 is 2.840911e-004
diff btw 2 and 5 is 7.172224e-005
diff btw 3 and 4 is 7.765008e-004
diff btw 3 and 5 is 6.430080e-004
diff btw 4 and 5 is 2.758040e-004
}

\commentout{
numatom == 10

>> distgtest
------------------------------------------------
distgtest icov  finalval = 4.325499e-009 itcount = 19
distgtest dfpp  finalval = 4.319587e-009  itcount = 15
distgtest cg-pr+  finalval = 4.240736e-009 itcount = 31
distgtest cg-fr  finalval = 1.213591e-010  itcount = 80
distgtest bfgsp  finalval = 4.373230e-009 itcount = 15
diff btw 1 and 2 is 5.291585e-005
diff btw 1 and 3 is 1.389807e-005
diff btw 1 and 4 is 1.337637e-003
diff btw 1 and 5 is 5.022388e-005
diff btw 2 and 3 is 5.684019e-005
diff btw 2 and 4 is 1.373974e-003
diff btw 2 and 5 is 1.421692e-005
diff btw 3 and 4 is 1.331635e-003
diff btw 3 and 5 is 5.363219e-005
diff btw 4 and 5 is 1.377974e-003
>> distgtest
------------------------------------------------
distgtest icov  finalval = 4.881964e-011 itcount = 37
distgtest dfpp  finalval = 3.048779e-011  itcount = 27
distgtest cg-pr+  finalval = 2.068921e-008 itcount = 100
distgtest cg-fr  finalval = 2.686521e-009  itcount = 80
distgtest bfgsp  finalval = 8.182415e-012 itcount = 24
diff btw 1 and 2 is 7.337363e-005
diff btw 1 and 3 is 2.466175e-003
diff btw 1 and 4 is 8.212885e-004
diff btw 1 and 5 is 6.918906e-005
diff btw 2 and 3 is 2.529682e-003
diff btw 2 and 4 is 8.834596e-004
diff btw 2 and 5 is 1.969120e-005
diff btw 3 and 4 is 1.647244e-003
diff btw 3 and 5 is 2.532803e-003
diff btw 4 and 5 is 8.870338e-004
>> distgtest
------------------------------------------------
distgtest icov  finalval = 4.977301e-011 itcount = 30
distgtest dfpp  finalval = 6.530781e-012  itcount = 25
distgtest cg-pr+  finalval = 2.732541e-010 itcount = 120
distgtest cg-fr  finalval = 6.629088e-011  itcount = 88
distgtest bfgsp  finalval = 3.166498e-012 itcount = 19
diff btw 1 and 2 is 6.017983e-005
diff btw 1 and 3 is 2.977275e-004
diff btw 1 and 4 is 1.407993e-005
diff btw 1 and 5 is 7.955102e-005
diff btw 2 and 3 is 2.377771e-004
diff btw 2 and 4 is 6.171195e-005
diff btw 2 and 5 is 1.973338e-005
diff btw 3 and 4 is 2.979745e-004
diff btw 3 and 5 is 2.183114e-004
diff btw 4 and 5 is 8.103332e-005
>> distgtest
------------------------------------------------
distgtest icov  finalval = 1.292816e-010 itcount = 48
distgtest dfpp  finalval = 8.897117e-008  itcount = 27
distgtest cg-pr+  finalval = 5.570512e-008 itcount = 119
distgtest cg-fr  finalval = 6.154201e-010  itcount = 336
distgtest bfgsp  finalval = 9.113596e-008 itcount = 21
diff btw 1 and 2 is 8.466525e-003
diff btw 1 and 3 is 6.713935e-003
diff btw 1 and 4 is 3.637941e-004
diff btw 1 and 5 is 8.504969e-003
diff btw 2 and 3 is 1.757871e-003
diff btw 2 and 4 is 8.796403e-003
diff btw 2 and 5 is 3.037571e-004
diff btw 3 and 4 is 7.044965e-003
diff btw 3 and 5 is 1.812115e-003
diff btw 4 and 5 is 8.830462e-003
>> 
}

The second test is a larger class of problems, namely, a finite
element mesh improvement problem.  Given a subdivision of a
region $\Omega\subset \R^3$ into tetrahedra, the problem under consideration
is to displace the nodes of the tetrahedra in such a way as to improve
the overall quality of the mesh.  There are several measures of quality;
we use the ratio of the volume of the tetrahedra to
the cube of one of its side lengths.  The minimum such ratio over all 
tetrahedra is a measure of the mesh quality (the closer to 0, the worse
the mesh).  The details of our method are in \cite{Sastry}.  Briefly,
we smooth this nonsmooth unconstrained problem (nonsmooth because it
is maximization of a minimum) by introducing an auxiliary
variable standing for the minimum ratio and constraints to enforce its
minimality.
The smoothed constrained problem is the solved with a barrier function
approach.  Ultimately, the problem reduces again to an unconstrained
problem, except the objective function is a smoothed version of the
original that involves the logarithms of the ratios.

There is a second source of nondifferentiability that remains
in the problem due to parametrization of the boundary.  For interior
nodes in the mesh, the variables in the optimization problem are
its $(x,y,z)$ coordinates.  For nodes on the boundary, however, the
variables are the $(u,v)$ or $t$ parametric coordinates of the boundary
surface.  We wish to allow nodes on the boundary to move from one parametric
patch of a boundary surface to another; 
such movement introduces a nondifferentiable jump
in the objective function.  If the boundary surfaces are smooth, 
it would be possible
in principle to come up with smooth local parametrizations that would
circumvent this difficulty, but we have not done so.

The line search is again based on bisection and enforces the inequality
$|\phi'(\alpha)|\le 0.7|\phi'(0)|$.  It needs a safeguard,
since a step too large can invert a tetrahedron, thus sending the above ratio
to a negative number and hence making the logarithm undefined.
The initial point for the optimization routine
is the mesh produced by the QMG mesh generator \cite{MitchVava:QMG}.

We tested three problems: a mesh of
a cylinder, of a cube with a large spherical
cavity, and of a tetrahedron with a small octahedral cavity.  
For this third problem, each boundary surface is a single flat
parametric patch, so the problem is differentiable because there
are no parametric jumps.
The results of this
test are shown in Table~\ref{table:meshimprove}.  This problem is again
nonconvex and probably has many local minima.  In this test case, we
did not have a means to ensure that the different algorithms find the
same minimizer.  The algorithms, however, returned solutions with comparable
objective function values (when they succeeded).    
BFGS and DFP failed in every case in the sense that they terminated
due to stagnation prior to satisfaction of the convergence termination
criterion.  Our test for stagnation was four successive iterations without
significant reduction in either the function value or gradient norm.
Prior to stagnation, there was generally slow progress in these
algorithms; for example, the
stagnation test required
127 iterations for BFGS and 126 for DFP in the
cylinder case to activate. The two conjugate gradients also 
sometimes failed due to
stagnation; CG-PR+ also failed once
for producing a search direction that was not a descent direction.

It must be pointed out that we have not implemented a restart strategy
for either BFGS or DFP.  Most modern implementations would have such
a strategy, and this would presumably ameliorate the difficulty with
slow progress.

\begin{table}
\begin{center}
\caption{Results of the optimization algorithms on the mesh improvement
problem.  A missing entry indicates failure of the iteration.}
\label{table:meshimprove}
\begin{tabular}{lrrr}
\hline
& \multicolumn{3}{c}{No.~of iterations} \\
  & Cylinder & Large cavity & Small cavity \\
\hline
$n$ & 7380 & 8775 &   4254 \\
SDICOV & 22 & 87 & 373  \\
BFGS & --- & --- & --- \\
DFP & --- & --- & --- \\
CG-PR+ & 34 & --- & --- \\
CG-FR & 81 & --- & --- \\
\hline
\end{tabular}
\end{center}
\end{table}

\section{Concluding remarks}
We propose a new iterative method for unconstrained minimization.
The algorithm is based on steepest descent after a linear change
of coordinates.  
It is a secant method if the line search is exact.
It always produces a descent direction except 
in the case that the line search produces a certain
degenerate result.

In practice,
the new method works well on two test cases.  
Our preliminary results hint
that, if restarting is not used, the new algorithm in practice is 
sometimes more robust than  BFGS and DFP.

\bibliography{../../Bibfiles/icov}
\bibliographystyle{plain}
\end{document}